\numberwithin{equation}{section}
\newtheorem{theorem}{Theorem}[section]
\newtheorem{lemma}[theorem]{Lemma}
\title{\textbf{Inequalities related to the coefficients of the $j$-function}}
\author{Zhongjie Li}
\affil{School of Mathematics and KL-AAGDM \break Tianjin University \break Tianjin 300350, China \break\texttt{lizhongjie@tju.edu.cn}}
\date{}
\begin{document}
\maketitle
\begin{abstract}
In recent years, the log-concavity or log-convexity of combinatorial sequences and their root sequences, higher order Tur{\'a}n inequalities, and Laguerre inequalities of order two have been widely studied. However, the research of the Fourier coefficient $c(n)$ of the $j$-function is limited to its asymptotic form. In this paper, we give the appropriate upper and lower bounds of $c(n)$ to establish the inequalities associated with it.
\end{abstract}

\noindent{\textbf{Keywords}:}
$j$-function, log-convexity, log-concavity, higher order Tur\'{a}n inequality, Laguerre inequality of order two.

\section{Introduction}
Let \textbf{H} denote the upper half-plane $\{z\in\mathbb{C}:\Im(z)>0\}$. The modular invari- ant $j$ is defined by
\begin{align*}
j(\tau) & = \frac{(1 + 240 \sum_{n \ge 1} \sigma_{3}(n) q^{n})^3}{q \prod_{n\ge 1} (1-q^{n})^{24}}\\
& = q^{-1} + 744 + 196884q + 21493760q^{2} + \cdots
\end{align*}
\begin{align*}
= q^{-1} + \sum_{n\ge 0} c(n) q^{n},
\end{align*}
where $\tau \in $ \textbf{H}, $\sigma_{3}(n) = \sum_{d|n} d^3$ and $q = e^{2\pi i \tau}$.\\
\indent In the past few decades, the coefficient $c(n)$ of the $j$-function has been extensively studied. Petersson \cite{petersson1932uber} and Rademacher \cite{rademacher1938fourier} independently used the circle method introduced by Hardy and Ramanujan \cite{hardy1918asymptotic} to derive the asymptotic formula,
\[c(n)\sim \frac{e^{4 \pi \sqrt{n}}}{\sqrt{2} n^{3/4}},\ n \rightarrow \infty. \]
Subsequently, Dewar and Murty \cite{dewar2013asymptotic} obtained the same result using Laplace's  method of steepest descent.\\
\indent To establish more effective estimates, Mahler \cite{mahler1974coefficients} proved that
\[c_{m}(n) = [q^{n}]j^{m} \le 1200 e^{4 \pi \sqrt{m(m+n)}},\quad \forall\ n, m \ge 1,\] 
while Herrmann \cite{herrmann1975uber} showed that 
\[c(n) \le 6 e^{4 \pi \sqrt{n}}, \quad \forall\ n \ge 1.\] In 2005, Brisebarre and Philibert \cite{brisebarre2005effective} derived precise lower and upper bounds for $c(n)$, which are utilized in this article. \\
\indent In this article, we primarily focus on inequalities associated with $c(n)$, including log-concavity, log-convexity, higher-order Tur{\' a}n inequality and  Laguerre inequality of order 2. Recall that a sequence $\{a_{n}\}_{n \ge 0}$ is called \textit{log-convex} ({\it log-concave}, respectively) if for all $n \ge 0$,
\begin{align}\label{1.1}
a_{n}a_{n+2} - a_{n+1}^{2} \ge 0\ (\le 0,\ respectively).
\end{align}
A sequence $\{a_n\}_{n \ge 0}$ is \textit{ratio log-convex} ({\it ratio log-concave}, respectively) if the sequence $\{\frac{a_{n+1}}{a_{n}}\}_{n \ge 0}$ is log-convex (log-concave, respectively). For a non-negative sequence $\{a_n\}_{n \ge 1}$, its {\it root sequence} is defined as $\{\sqrt[n]{a_n}\}_{n \ge 1}$. \\
\indent Liu and Wang \cite{liu2007log} gave criteria for three-term recursive sequences to satisfy log-convexity. Xia \cite{xia2018log} developed a systematic method to prove the log-concavity of $\{\sqrt[n]{a_n}\}_{n \ge 1}$ when $a_n$ satisfies a second-order linear recurrence. Numerous combinatorial sequences have been shown to be log-convex, and the root sequences of combinatorial sequences have been confirmed to be log-concave, see \cite{sun2020log, xia2025log, zhao2015log, zhao2018log} for further details.\\
\indent A sequence $\{a_n\}_{n \ge 1}$ is said to satisfy the \emph{higher order Tur{\'a}n inequalities} or the \emph{cubic Newton inequalities} if for all $n \ge 2$,
\begin{align}\label{1.2}
4(a_{n}^2 - a_{n-1}a_{n+1})(a_{n+1}^2 - a_{n}a_{n+2}) - (a_{n}a_{n+1} - a_{n-1}a_{n+2})^2 \ge 0.
\end{align}
The Tur{\'a}n inequality and the higher order Tur{\'a}n inequality are related to the Laguerre-P\'{o}lya class of real entire functions. A real entire function
\begin{align*}
\psi(x) = \sum_{n=0}^{\infty} a_{n} \frac{x^n}{n!}
\end{align*}
is said to be in the Laguerre-P\'{o}lya class, denoted $\psi(x) \in \mathcal{LP}$, if 
\begin{align*}
\psi(x) = cx^{m}e^{-\alpha x^2 + \beta x}\prod \limits_{k=1}^{\infty}(1 + x/x_{k})e^{-x/x_{k}},
\end{align*}
where $c,\ \beta,\ x_{k}$ are real numbers, $\alpha \ge 0$, $m$ is a nonnegative integer and $\sum x_{k}^{-2} < \infty$. Dimitrov \cite{dimitrov1998higher} proved that if a real entire function $\psi(x) \in \mathcal{LP}$, its Maclaurin coefficients satisfy the higher Tur{\' a}n inequality (\ref{1.2}).\\
\indent Hou and Li \cite{hou2021log} proposed an approach to analyze the higher order Tur{\' a}n inequalities for the P-recursive sequence $\{a_n\}_{n\ge N}$. Wang \cite{wang2019higher} presented a unified method to studying the higher order Tur{\'a}n inequalities for the sequence $\{\frac{a_{n}}{n!}\}_{n \ge 0}$ when $a_n$ satisfies a second-order linear recurrence. Many combinatorial sequences have been demonstrated
to satisfy the higher Tur{\'a}n inequality (\ref{1.2}), see \cite{chen2019higher, griffin2019jensen, liu2021inequalities} for more details.\\
\indent A polynomial $f(x)$ satisfies the Laguerre inequality if
\[f^{'}(x)^{2} - f(x) f^{''}(x) \ge 0.\]
In 1913, Jensen \cite{jensen1913recherches} introduced the $n$-th generalization of the Laguerre inequality as
\begin{align}\label{1.3}
L_n(f(x)):=\frac{1}{2}\sum_{k=0}^{2n}(-1)^{n+k}\binom{2n}{k}f^{(k)}(x)f^{(2n-k)}(x)\geq0,
\end{align}
where $f^{(k)}(x)$ denotes the $k$th derivative of $f(x)$.\\
Similarly, Wang and Yang \cite{wang2022laguerre} defined that a sequence $\{a_n\}_{n\ge 1}$ satisfies the Laguerre inequality of order $m$ if for $n\ge 1$,
\begin{align}\label{1.4}
L_m(a_n):=\frac{1}{2}\sum_{k=0}^{2m}(-1)^{k+m}\binom{2m}{k}a_{n+k}a_{2m-k+n}\geq0.
\end{align}
\indent Li \cite{li2022log} proposed a method to determine an explicit integer $N$ such that the Laguerre inequality of order 2 holds for a P-recursive sequence $\{a_n\}_{n\ge N}$. Wang and Yang \cite{wang2022laguerre} proved that the partition function, the
overpartition function, the Bernoulli numbers, the derangement numbers, the Motzkin numbers, the Fine numbers, the Franel numbers and the Domb numbers all satisfy the Laguerre inequalities of order two.\\
\indent In this paper, we first concretize the upper and lower bounds of $c(n)$ given in \cite{brisebarre2005effective} to get the upper and lower bounds we need in this article. Subsequently, we get the upper and lower bounds of $r(n) = \frac{c(n+1)}{c(n)}$ and $u(n) = \frac{c(n-1)c(n+1)}{c(n)^2}$.  By integrating the sufficient conditions proposed in this paper for the root sequences to satisfy log-concavity or log-convexity, the sufficient conditions for higher order Tur{\' a}n inequalities from \cite{hou2021log} and the sufficient conditions for second-order Laguerre inequalities from \cite{li2022log}, we systematically derive the inequalities related to $c(n)$.\\
\indent This paper is organized as follows. In Section \ref{s2}, we establish the lower and upper bounds for $c(n)$, $r(n)$ and $u(n)$. In Section \ref{s3}, we prove the sequence $\{c(n)\}_{n \ge 0}$ and the root sequence $\{\sqrt[n]{c(n)}\}_{n \ge 1}$ satisfy log-concavity or log-convexity. In Section \ref{s4}, we demonstrate that $c(n)$ satisfies both the higher order Tur{\'a}n inequality and the second-order Laguerre inequality. 

\section{The upper and lower bounds related to c(n)}\label{s2}
In this section, we will establish the upper and lower bounds for $c(n)$, $r(n)$, and $u(n)$, which are essential for Section \ref{s3} and Section \ref{s4}. In 2005, Brisebarre and Philibert \cite{brisebarre2005effective} derived effective bounds for the Fourier coefficients $c(n)$ of the modular invariant $j$.  
\begin{lemma}\rm{\bf{(\cite[Theorem 1.3]{brisebarre2005effective})}}\label{l2.1}
For $k \ge 0$, let
\[
(1,k) = \frac{\prod \limits_{j=0}^{k-1}(4 - (2j+1)^{2})}{4^{k}k!}.
\]
For all $n \ge 1$, $p \ge 1$, we have
\[
c(n) = \frac{e^{4 \pi \sqrt{n}}}{\sqrt{2} n^{3/4}}\left(\sum_{k = 0}^{p-1} \left(-\frac{1}{8 \pi}\right)^{k}\frac{(1,k)}{n^{k/2}} + \frac{r_{p}(n)}{n^{p/2}}\right),
\]
where
\[
|r_{p}(n)| \le \frac{1}{\sqrt{2}}\frac{|(1,p)|}{(4 \pi)^{p}} + 62\sqrt{2} e^{-2 \pi \sqrt{n}} n^{p/2}.
\]
\end{lemma}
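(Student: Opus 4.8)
\noindent The statement is \cite[Theorem 1.3]{brisebarre2005effective}, so in this paper it is used as a black box; for the reader's benefit we recall how one would recover it. The plan is to begin from the Rademacher--Petersson exact formula for the Fourier coefficients $c(n)$, $n\ge 1$, namely
\[
c(n) = \frac{2\pi}{\sqrt{n}}\sum_{k\ge 1}\frac{A_k(n)}{k}\,I_1\!\left(\frac{4\pi\sqrt{n}}{k}\right),
\]
where $I_1$ is the modified Bessel function of the first kind and $A_k(n)$ is the Kloosterman sum occurring in the formula, with $A_1(n)=1$, the trivial bound $|A_k(n)|\le k$, and Weil's bound $|A_k(n)|\ll_{\varepsilon} k^{1/2+\varepsilon}$. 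The main term is to come from the single summand $k=1$, and everything with $k\ge 2$ is to be pushed into the remainder $r_p(n)$.

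\noindent First I would treat the summand $\frac{2\pi}{\sqrt n}\,I_1(4\pi\sqrt n)$ by substituting the classical Poincar\'e-type asymptotic expansion of $I_1$ with an explicit error term: for $z>0$,
\[
I_1(z) = \frac{e^{z}}{\sqrt{2\pi z}}\left(\sum_{k=0}^{p-1}(-1)^{k}\frac{(1,k)}{(2z)^{k}} + \rho_p(z)\right),\qquad |\rho_p(z)|\le C\,\frac{|(1,p)|}{(2z)^{p}},
\]
with $C$ an absolute constant; here I would quote Olver's or Watson's treatise for a bound uniform in $p$ and in $z$. Taking $z=4\pi\sqrt n$, the prefactor $\frac{2\pi}{\sqrt n}\cdot\frac{e^{4\pi\sqrt n}}{\sqrt{8\pi^{2}\sqrt n}}$ collapses to $\frac{e^{4\pi\sqrt n}}{\sqrt{2}\,n^{3/4}}$, each factor $(2z)^{k}$ becomes $(8\pi)^{k}n^{k/2}$, and the finite sum turns into exactly $\sum_{k=0}^{p-1}\left(-\tfrac{1}{8\pi}\right)^{k}\frac{(1,k)}{n^{k/2}}$; the term $\rho_p$ then contributes at most $C\,|(1,p)|/(8\pi)^{p}$ to $r_p(n)$, and since $(8\pi)^{p}$ exceeds $(4\pi)^{p}$ by the factor $2^{p}$, there is ample room to bound this by $\tfrac{1}{\sqrt2}\,|(1,p)|/(4\pi)^{p}$.

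\noindent It then remains to bound the tail $\frac{2\pi}{\sqrt n}\sum_{k\ge 2}\frac{A_k(n)}{k}\,I_1(4\pi\sqrt n/k)$. For $2\le k\le 4\pi\sqrt n$ the argument of the Bessel function is at most $2\pi\sqrt n$, and since $I_1(y)=\frac{e^{y}}{\sqrt{2\pi y}}\,(1+O(1/y))$ the $k=2$ term is of order $e^{2\pi\sqrt n}/n^{3/4}$ while the remaining terms are exponentially smaller, their exponents not exceeding $\tfrac43\pi\sqrt n$; for $k>4\pi\sqrt n$ the bound $I_1(y)\le y$ on $(0,1]$ together with Weil's bound makes the remaining sum absolutely convergent and only polynomially large. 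After multiplying by $\frac{2\pi}{\sqrt n}$ and dividing by $\frac{e^{4\pi\sqrt n}}{\sqrt2\,n^{3/4}}$, the tail becomes a relative error of order $e^{-2\pi\sqrt n}$, which is dominated by $62\sqrt2\,e^{-2\pi\sqrt n}n^{-p/2}$ for every $p\ge 1$ (the spurious factor $n^{p/2}$ inside $r_p(n)$ being simply the effect of the final division by $n^{p/2}$). The only genuine difficulty lies in the bookkeeping: one must fix a completely explicit, $p$- and $z$-uniform remainder bound for the $I_1$-expansion and then track every constant through the tail estimate so as to reach exactly the numerical value $62\sqrt2$, keeping all inequalities effective rather than asymptotic.
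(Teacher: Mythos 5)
The paper itself offers no proof of this lemma: it is imported verbatim as Theorem 1.3 of Brisebarre--Philibert, so there is no internal argument to compare against, and your decision to treat it as a black box matches how the paper uses it. Your reconstruction does follow the route of the cited source (Petersson--Rademacher exact formula $c(n)=\frac{2\pi}{\sqrt n}\sum_{k\ge1}\frac{A_k(n)}{k}I_1(4\pi\sqrt n/k)$, an effective form of the Hankel expansion of $I_1$ at $z=4\pi\sqrt n$, and an estimate of the $k\ge 2$ tail), and the algebra you check --- the collapse of the prefactor to $e^{4\pi\sqrt n}/(\sqrt2\,n^{3/4})$ and the identification $(2z)^k=(8\pi)^k n^{k/2}$ --- is correct. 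Two caveats, which you label ``bookkeeping'' but which are really the substance of the cited theorem: (i) the margin between the first omitted term $|(1,p)|/(8\pi)^p$ and the stated bound $\tfrac{1}{\sqrt2}|(1,p)|/(4\pi)^p$ is $2^p/\sqrt2$, which equals $\sqrt2$ at $p=1$, so ``ample room'' is not automatic; the off-the-shelf uniform remainder bounds for $I_\nu$ (with their factor-$2$-type constants and the second, recessive exponential series) do not obviously fit under $\sqrt2$, and one needs the sharper remainder analysis that Brisebarre--Philibert actually carry out. (ii) The tail over $k\ge2$ is not absolutely convergent under the trivial bound $|A_k(n)|\le k$ (for large $k$ the terms behave like $\sqrt n/k$), so a power-saving Kloosterman-sum estimate is genuinely required --- you do invoke Weil, correctly --- and the explicit constant $62\sqrt2$ only emerges from tracking those estimates effectively. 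With those points understood, your outline is a faithful sketch of the external proof rather than a complete argument, which is an acceptable posture here since the paper itself only cites the result.
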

\indent By setting $p = 5$ in the lemma above, and verifying the values of the first 15 terms of $c(n)$, we obtain the following upper bound $b(n)$ and lower bound $a(n)$.
\begin{lemma}\label{l2.2}
For $n \ge 6$, we have
\begin{align*}
a(n) & = \frac{e^{4\pi \sqrt{n}}}{\sqrt{2} n^{3/4}}\left(d(n) - \frac{2.21073\cdot 10^{-5}}{n^{5/2}}\right),\\
b(n) & = \frac{e^{4\pi \sqrt{n}}}{\sqrt{2} n^{3/4}}\left(d(n) + \frac{2.21073\cdot 10^{-5}}{n^{5/2}}\right),
\end{align*}
where
\begin{align*}
d(n) = 1 - \frac{3}{32\pi n^{1/2}}- \frac{15}{2048 \pi ^{2} n} - \frac{105}{65536 \pi ^{3}n^{3/2}} - \frac{4725}{8388608 \pi^{4} n^{2}}.
\end{align*}
\end{lemma}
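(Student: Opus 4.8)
The plan is to specialize Lemma~\ref{l2.1} to $p=5$. It then reads, for all $n\ge 1$,
\[
c(n)=\frac{e^{4\pi\sqrt{n}}}{\sqrt{2}\,n^{3/4}}\left(\sum_{k=0}^{4}\left(-\frac{1}{8\pi}\right)^{k}\frac{(1,k)}{n^{k/2}}+\frac{r_{5}(n)}{n^{5/2}}\right),\qquad |r_{5}(n)|\le \frac{1}{\sqrt{2}}\frac{|(1,5)|}{(4\pi)^{5}}+62\sqrt{2}\,e^{-2\pi\sqrt{n}}\,n^{5/2}.
\]
First I would evaluate $(1,k)=\prod_{j=0}^{k-1}\bigl(4-(2j+1)^{2}\bigr)\big/(4^{k}k!)$ for $k=0,1,\dots,5$, getting $(1,0)=1$, $(1,1)=\tfrac34$, $(1,2)=-\tfrac{15}{32}$, $(1,3)=\tfrac{105}{128}$, $(1,4)=-\tfrac{4725}{2048}$, $(1,5)=\tfrac{72765}{8192}$, and substitute: the degree-four partial sum is then exactly $d(n)$, and the error estimate becomes fully explicit. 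Since $a(n)$ and $b(n)$ are obtained by subtracting and adding $2.21073\cdot 10^{-5}/n^{5/2}$ to $\tfrac{\sqrt{2}\,n^{3/4}}{e^{4\pi\sqrt{n}}}c(n)=d(n)+r_{5}(n)/n^{5/2}$, the claim $a(n)\le c(n)\le b(n)$ for $n\ge 6$ is equivalent to the single inequality $|r_{5}(n)|\le 2.21073\cdot 10^{-5}$ for all $n\ge 6$, and the proof reduces to this.

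For the asymptotic range I would use the bound above. Its first summand $\tfrac{1}{\sqrt{2}}|(1,5)|/(4\pi)^{5}$ is an explicit constant slightly above $2\cdot 10^{-5}$. For the second put $g(n)=62\sqrt{2}\,e^{-2\pi\sqrt{n}}n^{5/2}$; then $\frac{d}{dn}\log g(n)=\frac{5-2\pi\sqrt{n}}{2n}<0$ for every $n\ge 1$, so $g$ is strictly decreasing. Hence it suffices to exhibit one integer $N$ (a short computation shows $N=16$ is admissible) with $\tfrac{1}{\sqrt{2}}|(1,5)|/(4\pi)^{5}+g(N)\le 2.21073\cdot 10^{-5}$; by monotonicity of $g$ this then gives $|r_{5}(n)|\le 2.21073\cdot 10^{-5}$, that is $a(n)\le c(n)\le b(n)$, for all $n\ge N$.

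It remains to handle the finitely many values $6\le n\le N-1$. For these I would insert the known exact integer values of $c(n)$ together with rigorous rational enclosures of $e^{4\pi\sqrt{n}}$ and of the needed powers of $\pi$ (interval arithmetic of sufficient precision), evaluate $a(n)$ and $b(n)$, and check $a(n)\le c(n)\le b(n)$ one value at a time; this is exactly the verification of the first fifteen coefficients $c(n)$ referred to just before the statement. Combining the two ranges completes the proof.

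The delicate step is this finite verification rather than the asymptotic tail. For small $n$ the quantities $c(n)$ and $e^{4\pi\sqrt{n}}/(\sqrt{2}\,n^{3/4})$ agree to several significant digits, whereas the crude tail bound $g(n)$ furnished by Lemma~\ref{l2.1} is far too weak in this range (for example $g(6)$ is of order $10^{-3}$), so the two-sided inequality has to be certified by a genuinely high-precision rigorous computation. Correspondingly, the constant $2.21073\cdot 10^{-5}$ must be large enough to dominate simultaneously the tail bound at $n=N$ and $\max_{6\le n\le N-1}|r_{5}(n)|$; confirming that the stated value does so is the main quantitative obstacle.
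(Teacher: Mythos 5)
Your proposal is correct and follows essentially the same route as the paper, which simply specializes Lemma~\ref{l2.1} to $p=5$ (so the partial sum is exactly $d(n)$), bounds $|r_5(n)|$ by the explicit constant $\tfrac{1}{\sqrt{2}}|(1,5)|/(4\pi)^5\approx 2.004\cdot 10^{-5}$ plus the monotonically decreasing term $62\sqrt{2}\,e^{-2\pi\sqrt{n}}n^{5/2}$, and disposes of the small cases by checking the first $15$ coefficients $c(n)$ directly. Your write-up just makes the paper's one-line argument explicit (including the admissible cutoff $N=16$ and the finite verification), so there is nothing to correct.
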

\indent In order to keep the proof process simple, we occasionally employ the following simplified lower bound $e(n)$ of $c(n)$ in subsequent derivations.
\begin{theorem}\label{t2.3}
For $n \ge 2$, we have
\begin{align}\label{2.1}
e(n) = \frac{e^{4\pi \sqrt{n}}}{\sqrt{2}n}.
\end{align}
\end{theorem}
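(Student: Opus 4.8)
Equation \eqref{2.1} introduces $e(n)$ through the closed form $e(n) = \dfrac{e^{4\pi\sqrt n}}{\sqrt 2\, n}$, and, as the sentence preceding the theorem makes clear, its purpose is to furnish a single clean expression that may replace the cumbersome lower bound $a(n)$ of Lemma \ref{l2.2} in later estimates. To license this substitution the only thing that has to be verified is that $e(n)$ stays below $a(n)$, so that $e(n)$ inherits the property $a(n) \le c(n)$ recorded in Lemma \ref{l2.2}. The plan is thus to establish the comparison $e(n) \le a(n)$ for $n \ge 6$, and to treat the few indices $2 \le n \le 5$, which lie outside the range of Lemma \ref{l2.2}, by comparing $e(n)$ directly against the known integer values of $c(n)$.

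The comparison for $n \ge 6$ simplifies at once because $e(n)$ and $a(n)$ share the strictly positive prefactor $\dfrac{e^{4\pi\sqrt n}}{\sqrt 2\, n^{3/4}}$: one has $e(n) = \dfrac{e^{4\pi\sqrt n}}{\sqrt 2\, n^{3/4}}\, n^{-1/4}$, while $a(n) = \dfrac{e^{4\pi\sqrt n}}{\sqrt 2\, n^{3/4}}\bigl(d(n) - \tfrac{2.21073\times 10^{-5}}{n^{5/2}}\bigr)$. Cancelling this common factor reduces $e(n) \le a(n)$ to the purely algebraic inequality
\[
n^{-1/4} \le d(n) - \frac{2.21073\times 10^{-5}}{n^{5/2}}, \qquad n \ge 6 .
\]

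To prove this I would use the opposite monotonicities of the two sides. The left side $n^{-1/4}$ is strictly decreasing, whereas $d(n)$ is strictly increasing (each negative tail term of the form $-c_k\,\pi^{-k} n^{-k/2}$ has a positive derivative, so $d'(n) > 0$) and the error term $-2.21073\times 10^{-5}\, n^{-5/2}$ rises toward $0$; hence the right side is strictly increasing. The gap between the two sides is therefore increasing in $n$, so it suffices to verify the single boundary case $n = 6$, where $6^{-1/4} \approx 0.639$ lies comfortably below $d(6) \approx 0.988$. For $2 \le n \le 5$ I would simply check $e(n) \le c(n)$ against the tabulated values $c(2), c(3), c(4), c(5)$, which are the same data already used to calibrate Lemma \ref{l2.2}.

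The work here is entirely elementary, so I expect the only real care to lie in the numerics rather than in any idea. The explicit constants $\tfrac{3}{32\pi}$, $\tfrac{15}{2048\pi^2}$, $\tfrac{105}{65536\pi^3}$, $\tfrac{4725}{8388608\pi^4}$ appearing in $d(n)$, together with the error constant $2.21073\times 10^{-5}$, must be combined accurately enough that both the evaluation at $n = 6$ and the sign of $d'(n)$ are beyond doubt, and the four small-index comparisons must use the correct values of $c(n)$. Because the margin at $n = 6$ is so wide, no sharper tail estimate should be needed; the boundary check plus monotonicity closes the range $n \ge 6$, and the finite verification closes the rest.
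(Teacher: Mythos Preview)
Your proposal is correct and follows the same reduction as the paper: cancel the common prefactor and compare $n^{-1/4}$ with $d(n)-2.21073\times10^{-5}\,n^{-5/2}$, which is exactly the paper's inequality $A(n)\ge 0$ with $A(n)=n^{1/4}\bigl(d(n)-2.21073\times10^{-5}\,n^{-5/2}\bigr)-1$. The only methodological difference is in how the algebraic inequality is verified. The paper observes (via \texttt{Mathematica}) that $A(n)=0$ has a unique real root lying in $(1,2)$ and that $A(2)>0$, which already gives $e(n)\le a(n)$ for all $n\ge 2$ in one stroke; you instead argue by opposite monotonicity of the two sides, check only the endpoint $n=6$, and then handle $2\le n\le 5$ by direct comparison with the tabulated values of $c(n)$. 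Your route is slightly more self-contained (no root-finding needed) and is more explicit about the small indices, while the paper's route avoids the separate finite check on the $a(n)$ side; both close the argument without difficulty.
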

\begin{proof}
To prove that $e(n) \le c(n)$, it suffices to show that $e(n) \le a(n)$, so we only need to prove that the following inequality holds,
\begin{align*}
A(n) = n^{1/4}\left(d(n) - \frac{2.21073\cdot 10^{-5}}{n^{5/2}}\right) - 1 \ge 0.
\end{align*}
By calculation, we know that $A(n) = 0$ has a unique real root $a_{0} \in (1,2)$. Since $A(1) < 0$ and $A(2) > 0$, it follows that $A(n) > 0$ for all $n \ge 2$.

\end{proof}
\indent Using the inequality $a(n) \le c(n) \le b(n)$, we can deduce that $\frac{a(n+1)}{b(n)} \le r(n) \le \frac{b(n+1)}{a(n)}$. Subsequently, by comparing the first six terms of $c(n)$, we get the upper bound $g(n)$ and lower bound $f(n)$ for $r(n)$.
\begin{theorem}\label{t2.4}
For $n \ge 1$, we have
\begin{align}
f(n) & = h(n) + \frac{25.9592}{n^{5/2}} - \frac{1.78935}{n^{3}} - \frac{18.3114}{n^{7/2}} - \frac{11.5407}{n^4} ,\label{2.2}\\
g(n) & = h(n) + \frac{25.9593}{n^{5/2}} - \frac{0.78879}{n^{3}} ,\label{2.3}
\end{align}
where
\begin{align*}
h(n) = 1 + \frac{2 \pi}{n^{1/2}} + \frac{-\frac{3}{4} + 2 \pi^{2}}{n} + \frac{\frac{3}{64\pi} - 2\pi + \frac{4\pi^{3}}{3}}{n^{3/2}} + \frac{\frac{3}{4}+\frac{3}{256\pi^{2}} - \frac{5\pi^{2}}{2}+\frac{2\pi^{4}}{3}}{n^{2}}.
\end{align*}
\end{theorem}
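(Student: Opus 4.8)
The plan is to read the two displayed formulas (\ref{2.2}) and (\ref{2.3}) as the closed forms of the lower bound $f(n)$ and upper bound $g(n)$ for $r(n)$ that are produced from the sandwich $\tfrac{a(n+1)}{b(n)} \le r(n) \le \tfrac{b(n+1)}{a(n)}$ recorded just before the statement, and to establish these identities by an explicit expansion of the two bounding expressions. Writing $\varepsilon = 2.21073\cdot 10^{-5}$ and substituting the forms of $a,b$ from Lemma \ref{l2.2}, the common exponential normalisation $\tfrac{1}{\sqrt2}n^{-3/4}e^{4\pi\sqrt n}$ cancels in the ratios and I obtain
\[
\frac{a(n+1)}{b(n)} = e^{4\pi(\sqrt{n+1}-\sqrt{n})}\left(\frac{n}{n+1}\right)^{3/4}\frac{d(n+1)-\varepsilon(n+1)^{-5/2}}{d(n)+\varepsilon\,n^{-5/2}},
\]
with the two $\varepsilon$-signs reversed for $\tfrac{b(n+1)}{a(n)}$. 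Setting $t=n^{-1/2}$, the first task is the purely algebraic expansion of each transcendental factor as a power series in $t$, using $4\pi(\sqrt{n+1}-\sqrt{n}) = 2\pi t - \tfrac{\pi}{2}t^{3} + \tfrac{\pi}{4}t^{5} - \cdots$, the binomial series $(1+t^{2})^{-3/4} = 1 - \tfrac34 t^{2} + \tfrac{21}{32}t^{4}-\cdots$, and the $t$-expansions of $d(n)$ and $d(n+1)$.

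The key step is to verify the identity that, after discarding the $\varepsilon$-contributions, the coefficients of $t^{0},t^{1},t^{2},t^{3},t^{4}$ in the product above sum to exactly $h(n)$. The $2\pi^{2}$, $\tfrac{4\pi^{3}}{3}$ and $\tfrac{2\pi^{4}}{3}$ contributions are seen to come from exponentiating the exponent series, while the $\tfrac{3}{64\pi}$ and $\tfrac{3}{256\pi^{2}}$ terms (carrying $\pi$ in the denominator) must be tracked to the factor $d(n+1)/d(n)$. This symbolic identity, which I expect to be routine but bookkeeping-heavy, is precisely what justifies extracting $h(n)$ as the common main part of \emph{both} bounds.

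I would then collect everything of order $t^{5}=n^{-5/2}$ and higher, together with the two $\varepsilon$-error terms, into a remainder and bound it rigorously. A short computation shows the genuine $t^{5}$-coefficient of the main part equals $\tfrac{\pi}{4}-\pi^{3}+\tfrac{4\pi^{5}}{15}$ corrected by the power and $d$-factors, numerically $\approx 25.959$, which explains the leading correction $+25.9592\,n^{-5/2}$ in $f(n)$ and $+25.9593\,n^{-5/2}$ in $g(n)$; the tiny discrepancy and the lower-order terms absorb the $\varepsilon$-errors and the higher tail. Lower-bounding the remainder of $\tfrac{a(n+1)}{b(n)}$ and upper-bounding that of $\tfrac{b(n+1)}{a(n)}$, uniformly for $n\ge 6$, yields the negative rational corrections $-1.78935\,n^{-3}-18.3114\,n^{-7/2}-11.5407\,n^{-4}$ and $-0.78879\,n^{-3}$ respectively. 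This is the main obstacle: the expansion is mechanical, but pinning down the exact numerical coefficients and the correct sign pattern so that the displayed $f(n)$ and $g(n)$ are valid one-sided approximants for all admissible $n$ demands careful uniform tail estimates, with the worst case occurring at the smallest admissible $n$.

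Finally, because the inequality $a(n)\le c(n)\le b(n)$ of Lemma \ref{l2.2} holds only for $n\ge 6$, I would close the argument by checking the claimed closed forms directly for $1\le n\le 5$, using the known values $c(1),\dots,c(6)$ to confirm that the displayed $f(n)$ and $g(n)$ still sandwich $r(n)$ there. This is exactly the role of ``comparing the first six terms of $c(n)$'' alluded to before the statement, and it completes the derivation of the identities (\ref{2.2}) and (\ref{2.3}) for all $n\ge 1$.
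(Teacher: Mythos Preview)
Your proposal is correct and follows exactly the route the paper indicates: the paper's own argument for Theorem~\ref{t2.4} is only the sentence preceding the statement, namely that one passes from $a(n)\le c(n)\le b(n)$ to the sandwich $\tfrac{a(n+1)}{b(n)}\le r(n)\le\tfrac{b(n+1)}{a(n)}$ (valid for $n\ge 6$ by Lemma~\ref{l2.2}) and then checks the first six values of $c(n)$ by hand. Your elaboration---expanding the ratios in $t=n^{-1/2}$, identifying the first five coefficients with $h(n)$, and packaging the tail plus the $\varepsilon$-errors into the numerical correction terms---is precisely the computation the paper leaves implicit, so there is no methodological difference to report.
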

\indent In Section \ref{s4}, we need to use the bounds of $u(n)$, which imposes higher precision requirements on the bounds of $c(n)$. Therefore, by taking $p=6$ and verifying the values of the first 15 terms of $c(n)$, we derive the following more precise lower bound $k(n)$ and upper bound $l(n)$.
\begin{lemma}\label{l2.5}
For $n \ge 7$, we have
\begin{align*}
k(n) & = \frac{e^{4\pi \sqrt{n}}}{\sqrt{2} n^{3/4}}\left(m(n) - \frac{1.57696\cdot 10^{-5}}{n^{3}}\right),\\
l(n) & = \frac{e^{4\pi \sqrt{n}}}{\sqrt{2} n^{3/4}}\left(m(n) + \frac{1.57696\cdot 10^{-5}}{n^{3}}\right),
\end{align*}
where
\begin{align*}
& m(n) \\
={} & 1 - \frac{3}{32\pi n^{\frac{1}{2}}}- \frac{15}{2048 \pi ^{2} n} - \frac{105}{65536 \pi ^{3}n^{\frac{3}{2}}} - \frac{4725}{8388608 \pi^{4} n^{2}} - \frac{72765}{268435456 \pi^{5} n^{\frac{5}{2}}}.
\end{align*}
\end{lemma}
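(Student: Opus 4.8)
The plan is to reprise the argument behind Lemma \ref{l2.2}, now with the truncation parameter $p=6$ in Lemma \ref{l2.1}, and then to clean up the small values of $n$ by direct computation. Specializing Lemma \ref{l2.1} to $p=6$ gives
\[
c(n)=\frac{e^{4\pi\sqrt{n}}}{\sqrt{2}\,n^{3/4}}\left(\sum_{k=0}^{5}\Bigl(-\frac{1}{8\pi}\Bigr)^{k}\frac{(1,k)}{n^{k/2}}+\frac{r_{6}(n)}{n^{3}}\right),
\]
with $|r_{6}(n)|\le \frac{1}{\sqrt{2}}|(1,6)|/(4\pi)^{6}+62\sqrt{2}\,e^{-2\pi\sqrt{n}}n^{3}$. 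A direct evaluation of the coefficients $(1,0),\dots,(1,5)$ shows that the displayed sum is exactly the polynomial in $n^{-1/2}$ that the lemma calls $m(n)$. Consequently the entire statement reduces to the single estimate $|r_{6}(n)|\le 1.57696\cdot 10^{-5}$: once this is in hand, $r_{6}(n)/n^{3}$ lies between $\pm\,1.57696\cdot 10^{-5}/n^{3}$, which is precisely the gap separating $k(n)$, $c(n)$, $l(n)$ after dividing out $e^{4\pi\sqrt n}/(\sqrt2\,n^{3/4})$.

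To control $|r_{6}(n)|$ I would treat the two summands of the bound separately. The first is a constant: computing $(1,6)$ explicitly and estimating $(4\pi)^{6}$ shows $\tfrac{1}{\sqrt2}|(1,6)|/(4\pi)^{6}$ is slightly below $7.78\cdot 10^{-6}$, so a margin of a bit under $8\cdot 10^{-6}$ remains for the second summand. For that second summand, set $\varphi(n)=e^{-2\pi\sqrt{n}}n^{3}$; then $(\log\varphi)'(n)=-\pi/\sqrt{n}+3/n<0$ for $n>9/\pi^{2}$, so $\varphi$ is strictly decreasing on $[1,\infty)$ and it suffices to evaluate $62\sqrt{2}\,\varphi(n)$ at a single point. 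A short calculation shows $62\sqrt{2}\,\varphi(n)$ drops below the available margin once $n$ reaches an explicit threshold $n_{0}$ (numerically $n_{0}$ sits right around $15$, which is exactly why the coarser $p=5$ bound of Lemma \ref{l2.2} is not sharp enough here). Hence $|r_{6}(n)|\le 1.57696\cdot 10^{-5}$, and therefore $k(n)\le c(n)\le l(n)$, for all $n\ge n_{0}$.

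It remains to dispose of the finitely many indices $7\le n<n_{0}$. For these I would use the integer values $c(7),c(8),\dots$ read directly off the $q$-expansion of $j$ and check the numerical inequalities $k(n)\le c(n)\le l(n)$ one index at a time; this is the ``verifying the values of the first $15$ terms of $c(n)$'' step flagged just before the lemma. Concatenating the two ranges yields the assertion for every $n\ge 7$.

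The only delicate point is numerical: the constant $1.57696\cdot 10^{-5}$ is only just large enough to absorb the crude tail term $62\sqrt{2}\,e^{-2\pi\sqrt{n}}n^{3}$ in the vicinity of $n=15$, so $|(1,6)|$, $(4\pi)^{6}$, and $\varphi(n_{0})$ must all be computed with enough precision to guarantee that the ``asymptotic'' range $n\ge n_{0}$ and the hand-checked range $7\le n<n_{0}$ actually overlap, leaving no gap. Everything else in the argument is routine arithmetic.
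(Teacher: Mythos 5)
Your proposal follows exactly the route the paper indicates (and leaves implicit): specialize Lemma \ref{l2.1} to $p=6$, identify the truncated sum with $m(n)$, bound $|r_6(n)|$ by the constant $\tfrac{1}{\sqrt{2}}|(1,6)|/(4\pi)^6 \approx 7.78\cdot 10^{-6}$ plus the monotonically decreasing tail $62\sqrt{2}\,e^{-2\pi\sqrt{n}}n^{3}$, and dispose of the finitely many small indices by direct comparison with the exact values of $c(n)$ (the paper's ``first $15$ terms''). This is correct; the only caveat is that the combined bound appears to exceed $1.57696\cdot 10^{-5}$ by a hair at $n=15$ and only drops safely below it at $n=16$, so your hand-checked range should be taken as $7\le n\le 15$ to make the two ranges meet.
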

\indent Using inequality $k(n) < c(n) < l(n)$, we can derive $\frac{k(n-1)k(n+1)}{l(n)^{2}} < u(n) < \frac{l(n-1)l(n+1)}{k(n)^{2}}$. At the same time, by comparing the first 7 terms of $c(n)$, and finally we obtain the lower bound $p(n)$ and upper bound $q(n)$ for $u(n)$.
\begin{theorem}\label{t2.6}
For $n\ge 1$, we have
\begin{align}
p(n) &= 1 - \frac{\pi}{n^{3/2}} + \frac{\frac{3}{4}}{n^{2}} - \frac{\frac{9}{128\pi}-1}{n^{5/2}} ,\label{2.4}\\
q(n) &= 1 - \frac{\pi}{n^{3/2}} + \frac{\frac{3}{4}}{n^{2}} - \frac{\frac{9}{128\pi}}{n^{5/2}} + \frac{5.93249}{n^{3}}.\label{2.5}
\end{align}
\end{theorem}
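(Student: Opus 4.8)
The plan is to split the range of $n$ into a short initial segment handled by direct computation and an infinite tail handled through the effective bounds of Lemma~\ref{l2.5}. For $1\le n\le 7$ one simply evaluates $u(n)=c(n-1)c(n+1)/c(n)^{2}$ from the tabulated coefficients $c(0),c(1),\dots,c(8)$ and checks $p(n)\le u(n)\le q(n)$ numerically (this is also the range in which Lemma~\ref{l2.5}, whose argument must be $\ge 7$, is not yet available for all of $c(n-1),c(n),c(n+1)$). For $n\ge 8$, Lemma~\ref{l2.5} gives $k(m)<c(m)<l(m)$ for $m=n-1,n,n+1$, hence
\[
\frac{k(n-1)\,k(n+1)}{l(n)^{2}}\;<\;u(n)\;<\;\frac{l(n-1)\,l(n+1)}{k(n)^{2}},
\]
and it suffices to establish the two algebraic inequalities $p(n)\le \dfrac{k(n-1)k(n+1)}{l(n)^{2}}$ and $\dfrac{l(n-1)l(n+1)}{k(n)^{2}}\le q(n)$ for all $n\ge 8$.

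The one non-elementary ingredient is the exponential part. In each of the two quotients the factors $e^{4\pi\sqrt{m}}$ combine into $E(n):=e^{4\pi\phi(n)}$ with $\phi(n):=\sqrt{n-1}+\sqrt{n+1}-2\sqrt{n}$, leaving an explicit algebraic cofactor built from $n$, $n\pm 1$, $n^{2}-1$ and the polynomials $m(n\pm 1),m(n)$. I would first rationalize $\phi(n)$; using $\sqrt{n-1}+\sqrt{n+1}=\sqrt{2n+2\sqrt{n^{2}-1}}$ one gets
\[
\phi(n)=\frac{-2}{\bigl(n+\sqrt{n^{2}-1}\bigr)\bigl(2\sqrt{n}+\sqrt{2n+2\sqrt{n^{2}-1}}\bigr)},
\]
and then crude bounds on the two factors of the denominator (for instance $n-\tfrac{1}{2n}<\sqrt{n^{2}-1}<n$) give, for $n\ge 8$, an estimate of the shape $-\tfrac{1}{4n^{3/2}}-\tfrac{C_{1}}{n^{7/2}}<\phi(n)<-\tfrac{1}{4n^{3/2}}$. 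Feeding this into $1+x<e^{x}$ (all $x$) and $e^{x}\le 1+x+\tfrac{x^{2}}{2}$ ($x\le 0$) yields two-sided bounds for $E(n)$ that are rational in $n^{-1/2}$ and whose leading correction is exactly the $-\pi/n^{3/2}$ term visible in $p(n)$ and $q(n)$, the next corrections landing in the $n^{-2}$ and $n^{-5/2}$ places.

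With $E(n)$ sandwiched, what remains is a comparison of fully explicit algebraic functions of $n$. I would eliminate the fractional powers by the elementary envelopes $1-\tfrac{3}{4n^{2}}\le\bigl(1-\tfrac{1}{n^{2}}\bigr)^{3/4}\le 1$ for the factor $(n^{2}-1)^{3/4}=n^{3/2}\bigl(1-\tfrac1{n^{2}}\bigr)^{3/4}$, together with $\bigl(1\pm\tfrac1n\bigr)^{\alpha}$-type envelopes to re-express the $\tfrac12$- and $\tfrac34$-powers of $n\pm 1$ occurring in $m(n\pm1)$ and $k(n\pm1),l(n\pm1)$ in terms of $n^{-1/2}$ alone; each target inequality then becomes the positivity, for $x=n^{-1/2}\in(0,8^{-1/2}]$, of an explicit polynomial in $x$, which is settled by a root count (or a direct coefficient estimate). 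The crux, and the reason Lemma~\ref{l2.5} is stated with the sixth-order term of $m(n)$ and the small remainder $1.57696\cdot10^{-5}/n^{3}$ rather than the coarser $a(n),b(n)$ of Lemma~\ref{l2.2}, is slack management: the accumulated errors from Lemma~\ref{l2.5}, from replacing $e^{x}$ by $1+x$ or $1+x+\tfrac{x^{2}}{2}$, and from the $\tfrac34$-power envelopes must all be pushed to order $n^{-3}$ so as not to overwhelm the genuine $n^{-5/2}$ coefficient of $u(n)$ — and it is precisely this bookkeeping that forces the otherwise peculiar constants $\tfrac{9}{128\pi}-1$ in $p(n)$ and $5.93249$ in $q(n)$.
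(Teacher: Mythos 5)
Your plan is structurally the paper's own: the paper likewise sandwiches $u(n)$ between $\frac{k(n-1)k(n+1)}{l(n)^{2}}$ and $\frac{l(n-1)l(n+1)}{k(n)^{2}}$ via Lemma \ref{l2.5} and disposes of small $n$ by direct computation, leaving the tail comparison unwritten, and your way of filling it in (rationalizing $\phi(n)$, the sandwich $1+x<e^{x}\le 1+x+\tfrac{x^{2}}{2}$ for $x\le 0$, envelopes for the fractional powers, positivity of a polynomial in $x=n^{-1/2}$) is the natural one. But the decisive step you promise, namely $p(n)\le\frac{k(n-1)k(n+1)}{l(n)^{2}}$ for all $n\ge 8$, cannot be established, because with $p(n)$ as printed in (\ref{2.4}) it is false for large $n$ (and so is $p(n)\le u(n)$ itself). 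Carrying out exactly the expansion you outline gives
\[
u(n)=1-\frac{\pi}{n^{3/2}}+\frac{3}{4n^{2}}-\frac{9}{128\pi n^{5/2}}
+\Bigl(\frac{\pi^{2}}{2}-\frac{3}{128\pi^{2}}\Bigr)\frac{1}{n^{3}}+O\bigl(n^{-7/2}\bigr),
\]
whose $n^{-3}$ coefficient is $\approx 4.932$ (visibly the source of the $5.93249$ in $q(n)$), whereas (\ref{2.4}) puts $-\bigl(\tfrac{9}{128\pi}-1\bigr)=+0.9776\ldots$ in the $n^{-5/2}$ place; hence
\[
p(n)-u(n)=\frac{1}{n^{5/2}}-\frac{4.93\ldots}{n^{3}}+O\bigl(n^{-7/2}\bigr)>0
\quad\text{once }\sqrt{n}\gtrsim 5.
\]
Concretely, at $n=100$ Lemma \ref{l2.5} forces $u(100)<0.996938$, while $p(100)\approx 0.996943$.

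So either (\ref{2.4}) carries a sign slip — the numerator should read $\tfrac{9}{128\pi}+1$, so that the lower bound undercuts $u(n)$ by about $n^{-5/2}$ just as $q(n)$ overshoots it by about $n^{-3}$ — or the theorem must be re-derived; your final polynomial-positivity check would in fact expose the problem, since the resulting polynomial in $x=n^{-1/2}$ is negative near $x=0$, and the failure propagates to Theorems \ref{t4.2} and \ref{t4.4}, which use $p(n)$ as a lower bound of $u(n)$. With the corrected $p(n)$ your outline coincides with the paper's (unwritten) derivation and can be pushed through. One bookkeeping correction for that push: the corrections from the $E(n)$ sandwich beyond $-\pi n^{-3/2}$ enter at orders $n^{-3}$ and $n^{-7/2}$, not $n^{-2}$ and $n^{-5/2}$; the $n^{-2}$ and $n^{-5/2}$ terms of $p,q$ come from $\bigl(n^{2}/(n^{2}-1)\bigr)^{3/4}$ and from $m(n-1)m(n+1)/m(n)^{2}$ respectively, so the slack budget should be organized around those factors.
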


\section{Log-concavity and log-convexity}\label{s3}
In this section, we first apply the bounds (\ref{2.2}) and (\ref{2.3}) of $r(n) = \frac{c(n+1)}{c(n)}$ to derive both the log-concavity and ratio log-convexity of $c(n)$. 
\begin{theorem}\label{t3.1}
The sequence $\{c(n)\}_{n \ge 0}$ is log-concave and the sequence $\{c(n)\}_{n \ge 1}$ is ratio log-convex.
\end{theorem}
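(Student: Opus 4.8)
The plan is to reduce both statements to properties of the single sequence $r(n)=c(n+1)/c(n)$ and then to replace $r$ by the explicit bounds $f(n)\le r(n)\le g(n)$ from Theorem~\ref{t2.4}. Indeed, since $c>0$, log-concavity of $\{c(n)\}_{n\ge0}$ is equivalent to $r(n+1)\le r(n)$ for all $n\ge0$, and ratio log-convexity of $\{c(n)\}_{n\ge1}$ is equivalent to $r(n)r(n+2)\ge r(n+1)^2$ for all $n\ge1$. Using $r(n)\ge f(n)>0$ and $r(n+1)\le g(n+1)$, the first reduces to showing $f(n)\ge g(n+1)$ for all $n\ge N$, and, using in addition $r(n+2)\ge f(n+2)>0$, the second reduces to $f(n)f(n+2)\ge g(n+1)^2$ for all $n\ge N$, for a suitable effective $N$; the finitely many indices below $N$ (including $n=0$ in the first case) are then verified directly from the known values $c(0)=744$, $c(1)=196884$, $c(2)=21493760,\dots$.

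For $f(n)\ge g(n+1)$, write $h$ for the common main part of $f$ and $g$ in (\ref{2.2})--(\ref{2.3}), so that $f(n)-g(n+1)=\bigl(h(n)-h(n+1)\bigr)+E(n)$, where $E(n)$ gathers the rational correction terms and is $O(n^{-5/2})$. The key point is that $-h'(x)=\pi x^{-3/2}+\bigl(2\pi^2-\tfrac34\bigr)x^{-2}+\tfrac32\bigl(\tfrac{3}{64\pi}-2\pi+\tfrac{4\pi^3}{3}\bigr)x^{-5/2}+2\bigl(\tfrac34+\tfrac{3}{256\pi^2}-\tfrac{5\pi^2}{2}+\tfrac{2\pi^4}{3}\bigr)x^{-3}$ has all four coefficients positive, so $-h'$ is positive and decreasing and therefore $h(n)-h(n+1)=\int_n^{n+1}\!(-h'(x))\,dx\ge -h'(n+1)\ge\pi(n+1)^{-3/2}$. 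Since the dominant term is of order $n^{-3/2}$ while $E(n)=O(n^{-5/2})$, clearing denominators (and using $n+1\le 2n$ to remove the shifted radical) turns $f(n)\ge g(n+1)$ into the positivity, for $t=\sqrt n$ large, of an explicit polynomial with positive leading coefficient; this holds for all $n\ge N$ with a modest explicit $N$.

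For ratio log-convexity, expand $f(n)f(n+2)-g(n+1)^2$ and split off the main part $h(n)h(n+2)-h(n+1)^2$. Setting $b=h(n+1)$, $\delta_1=h(n)-h(n+1)>0$, $\delta_2=h(n+2)-h(n+1)<0$, we have $h(n)h(n+2)-h(n+1)^2=b\bigl(h(n)-2h(n+1)+h(n+2)\bigr)+\delta_1\delta_2$. The second forward difference is positive, because $h''(x)=\tfrac{3\pi}{2}x^{-5/2}+\cdots$ again has all coefficients positive (so $h$ is convex) and $h''$ is decreasing, whence $h(n)-2h(n+1)+h(n+2)\ge h''(n+2)\ge\tfrac{3\pi}{2}(n+2)^{-5/2}$; moreover $b>1$ and $|\delta_1\delta_2|\le(-h'(n))(-h'(n+1))=O(n^{-3})$. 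Hence the main part is positive of order $n^{-5/2}$ with leading constant $\tfrac{3\pi}{2}$. The correction terms of $f$ and $g$ contribute to $f(n)f(n+2)-g(n+1)^2$ only $\bigl(2\cdot25.9592-2\cdot25.9593\bigr)n^{-5/2}=O(10^{-4}n^{-5/2})$ up to an $O(n^{-3})$ remainder, which is negligible against $\tfrac{3\pi}{2}n^{-5/2}$; in fact the remaining $O(n^{-3})$ terms nearly cancel as well. So, as before, the claim reduces after clearing denominators to the positivity of an explicit polynomial for all $n\ge N$, the finitely many smaller indices being checked by hand.

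The main obstacle is the bookkeeping in this last reduction: the true curvature of $h$, the $O(n^{-5/2})$ errors in $f$ and $g$, and the difference between the constants $25.9592$ and $25.9593$ all sit at the same order $n^{-5/2}$, so one cannot afford the crudest bounds and must either keep the relevant constants exactly or use monotonicity/integral estimates for the three shifted values $h(n),h(n+1),h(n+2)$ (whose square roots $\sqrt n,\sqrt{n+1},\sqrt{n+2}$ otherwise obstruct a single clean substitution) before passing to a polynomial positivity argument. The log-concavity step is by comparison easy, since there the dominant term $h(n)-h(n+1)\sim\pi n^{-3/2}$ beats the $O(n^{-5/2})$ corrections with a large margin.
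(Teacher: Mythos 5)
Your proposal follows essentially the same route as the paper: both statements are reduced, via the bounds $f(n)\le r(n)\le g(n)$ of Theorem~\ref{t2.4}, to the explicit inequalities $f(n)\ge g(n+1)$ and $f(n)f(n+2)\ge g(n+1)^2$, with the finitely many small indices checked numerically. The only difference is that the paper certifies these rational inequalities by isolating their real roots with {\tt Mathematica}, whereas you verify them by hand through monotonicity/convexity estimates for $h$ and a polynomial positivity argument; this is a difference in bookkeeping, not in approach.
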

\begin{proof}
To establish the log-convexity of $c(n)$, it suffices to demonstrate that the following inequality holds,
\[\frac{c(n+1)}{c(n)} \ge \frac{c(n+2)}{c(n+1)}.\]
Thus, we only need to verify that 
\[f(n) \ge g(n+1).\]
\indent Through {\tt Mathematica}, we can conclude that $f(n) - g(n+1) = 0$ has a unique real root $b_{0} \in (0,1)$. Given that $f(1) - g(2) >0$, it follows that $f(n) > g(n+1)$ for $n \ge 1$. By verifying the initial values, we confirm that the sequence $\{c(n)\}_{n \ge 0}$ is log-concave.\\
\indent Similarly, by applying the same method, we can prove that the sequence $\{c(n)\}_{n \ge 1}$ is ratio log-convex.
\end{proof}
\indent Before proving that the root sequence $\{\sqrt[n]{c(n)}\}_{n \ge 1}$ is log-convex, we first give a sufficient condition for the log-convexity of the root sequence of a sequence. The proof process is similar to that of \cite[Theorem 3.1]{hou2023log}, thus is  omited here.
\begin{lemma}\label{l3.2}
Let $\{a_n\}_{n \ge 1}$ be a positive sequence. Suppose we can find a lower bound $h_n$ of $a_n$, a lower bound $f_n$ and an upper bound $g_n$ of $\frac{a_{n+1}}{a_n}$, such that
\begin{equation}\label{3.1}
2 \log h_n + n(n+1) \log f_{n+1} - n(n+3) \log g_n \ge 0, \quad \forall\ n \ge N.
\end{equation}
Then the root sequence $\{\sqrt[n]{a_n}\}_{n\ge{N}}$ is log-convex.
\end{lemma}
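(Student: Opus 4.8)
The plan is to pass to logarithms, reduce the log-convexity of $\{\sqrt[n]{a_n}\}_{n\ge N}$ to one scalar inequality in $\log a_n$ and the two consecutive ratios $\frac{a_{n+1}}{a_n}$, $\frac{a_{n+2}}{a_{n+1}}$, and then substitute the given bounds. First I would note that $\{\sqrt[n]{a_n}\}_{n\ge N}$ is log-convex precisely when $\sqrt[n]{a_n}\,\sqrt[n+2]{a_{n+2}}\ge\big(\sqrt[n+1]{a_{n+1}}\big)^2$ for all $n\ge N$; since every term is positive, taking logarithms and multiplying by $n(n+1)(n+2)>0$ turns this into
\[
(n+1)(n+2)\log a_n + n(n+1)\log a_{n+2} - 2n(n+2)\log a_{n+1}\ge 0 .
\]

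Next I would rewrite the higher-index terms in terms of $a_n$ and the ratios, using $\log a_{n+1}=\log a_n+\log\frac{a_{n+1}}{a_n}$ and $\log a_{n+2}=\log a_n+\log\frac{a_{n+1}}{a_n}+\log\frac{a_{n+2}}{a_{n+1}}$. Substituting and collecting coefficients, the coefficient of $\log a_n$ becomes $2(n+1)^2-2n(n+2)=2$, the coefficient of $\log\frac{a_{n+1}}{a_n}$ becomes $n(n+1)-2n(n+2)=-n(n+3)$, and that of $\log\frac{a_{n+2}}{a_{n+1}}$ stays $n(n+1)$. Hence it suffices to prove
\[
2\log a_n - n(n+3)\log\tfrac{a_{n+1}}{a_n} + n(n+1)\log\tfrac{a_{n+2}}{a_{n+1}}\ge 0,\qquad n\ge N .
\]

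Finally I would feed in the three bounds, watching the signs of the coefficients. Since $a_n\ge h_n>0$ we have $2\log a_n\ge 2\log h_n$; since $n(n+1)>0$ and $\frac{a_{n+2}}{a_{n+1}}\ge f_{n+1}$ we have $n(n+1)\log\frac{a_{n+2}}{a_{n+1}}\ge n(n+1)\log f_{n+1}$; and because the coefficient $-n(n+3)$ is \emph{negative}, the upper bound $\frac{a_{n+1}}{a_n}\le g_n$ gives $-n(n+3)\log\frac{a_{n+1}}{a_n}\ge -n(n+3)\log g_n$. Summing these three inequalities produces exactly the left side of (\ref{3.1}), which is nonnegative by hypothesis, and the proof is complete. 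There is no serious obstacle here: the argument is essentially the bookkeeping of coefficients together with the observation — which dictates the precise shape of the hypothesis — that the middle term carries a negative coefficient, so that a \emph{lower} bound on $a_n$, an \emph{upper} bound on $\frac{a_{n+1}}{a_n}$, and a \emph{lower} bound on $\frac{a_{n+2}}{a_{n+1}}$ are exactly what one needs; one only has to make sure the bounds $h_n,f_{n+1},g_n$ are positive so that the logarithms are defined, which is automatic for bounds of a positive sequence. This follows the same scheme as the proof of \cite[Theorem 3.1]{hou2023log} referenced above.
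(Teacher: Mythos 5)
Your proof is correct and is exactly the argument the paper has in mind: the paper omits the proof, deferring to the analogous \cite[Theorem 3.1]{hou2023log}, and your reduction of $\sqrt[n]{a_n}\sqrt[n+2]{a_{n+2}}\ge\left(\sqrt[n+1]{a_{n+1}}\right)^2$ via multiplication by $n(n+1)(n+2)$, the coefficient bookkeeping $2$, $-n(n+3)$, $n(n+1)$, and the sign-aware substitution of $h_n$, $g_n$, $f_{n+1}$ reproduce hypothesis (\ref{3.1}) precisely. No gaps.
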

\indent Next, we use the above lemma and the bounds (\ref{2.1})--(\ref{2.3}) to give the proof process of the log-convexity for the sequence $\{\sqrt[n]{c(n)}\}_{n \ge 1}$.
\begin{theorem}\label{t3.3}
The sequence $\{\sqrt[n]{c(n)}\}_{n \ge 1}$ is log-convex.
\end{theorem}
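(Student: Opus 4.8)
The plan is to apply Lemma~\ref{l3.2} to the sequence $a_n = c(n)$, using the simplified lower bound $h_n = e(n)$ of Theorem~\ref{t2.3} together with the lower bound $f_n = f(n)$ and the upper bound $g_n = g(n)$ of $r(n) = c(n+1)/c(n)$ from Theorem~\ref{t2.4} (bounds (\ref{2.1})--(\ref{2.3})). Since $e(n) \le c(n)$ for $n \ge 2$ and $f(n) \le r(n) \le g(n)$ for $n \ge 1$, the hypotheses of Lemma~\ref{l3.2} hold for $n \ge 2$, so it suffices to exhibit an explicit integer $N \ge 2$ with
\begin{align*}
\Phi(n) := 2\log e(n) + n(n+1)\log f(n+1) - n(n+3)\log g(n) \ge 0, \qquad n \ge N,
\end{align*}
and then to verify the root-sequence log-convexity inequality
\[
\tfrac1n \log c(n) + \tfrac1{n+2}\log c(n+2) \ge \tfrac2{n+1}\log c(n+1)
\]
directly for the finitely many indices $1 \le n < N$ (in particular $n = 1$, which lies outside the range where $e(n)$ is a valid lower bound) from the known integer values of $c(n)$.

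The term $2\log e(n) = 8\pi\sqrt n - \log 2 - 2\log n$ is completely explicit, so the entire difficulty is in estimating $n(n+1)\log f(n+1) - n(n+3)\log g(n)$. First I would record that $f(n)$ and $g(n)$ are perturbations of $h(n) = 1 + 2\pi/\sqrt n + O(1/n)$, whose defining coefficients are all positive, so $f(n), g(n) > 0$ and the logarithms are defined for every $n \ge 1$. Writing $\log f(n+1) = \log h(n+1) + \log\bigl(1 + (f(n+1)-h(n+1))/h(n+1)\bigr)$ and similarly for $g$, and substituting $x = 1/\sqrt n$, one expands each logarithm via $\log(1+t) = t - t^2/2 + \cdots$ with a Lagrange remainder, replacing it by an explicit polynomial in $x$ plus a sign-controlled error. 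A term count then shows
\begin{align*}
\Phi(n) = 3\pi\sqrt n + O(1) \longrightarrow +\infty,
\end{align*}
the positive contribution $8\pi\sqrt n$ from $2\log e(n)$ beating the $-5\pi\sqrt n$ coming from the algebraic part $-2n\log h(n)$ together with the discrete-derivative part $n(n+1)\bigl(\log h(n+1) - \log h(n)\bigr)$. The claim for $n \ge N$ thus reduces to a polynomial inequality in $x$ on a half-open interval $(0, 1/\sqrt N\,]$, which can be certified with {\tt Mathematica} by checking that the relevant auxiliary function has a single real root and a positive value at a nearby point, exactly as in the proofs of Theorems~\ref{t2.3} and \ref{t3.1}.

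The delicate point, and the step I expect to be the main obstacle, is precisely this cancellation in $n(n+1)\log f(n+1) - n(n+3)\log g(n)$: the bounds $f(n)$ and $g(n)$ agree through order $n^{-2}$ and differ only in terms of size $n^{-5/2}$ and smaller, yet each is multiplied by a factor of order $n^2$, so truncating the expansion of $\log f$ or $\log g$ too early introduces errors of the same order $\sqrt n$ as the main term $3\pi\sqrt n$ one wants to isolate. One therefore has to carry the logarithmic expansions through order $n^{-5/2}$ (equivalently $x^5$) with honest remainder bounds before the polynomial inequality becomes manifestly positive for small $x$; once $N$ is pinned down, the remaining finite verification for $1 \le n < N$ is routine.
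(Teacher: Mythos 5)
Your proposal is correct and follows essentially the same route as the paper: it invokes Lemma~\ref{l3.2} with exactly the bounds $e(n)$, $f(n)$, $g(n)$ of (\ref{2.1})--(\ref{2.3}), reduces to the same inequality $D(n)=2\log e(n)+n(n+1)\log f(n+1)-n(n+3)\log g(n)\ge 0$ for $n\ge 2$, and handles $n=1$ by direct verification. The only difference is the finishing step --- you certify positivity via a truncated expansion in $1/\sqrt{n}$ reducing to a polynomial inequality, whereas the paper argues through the signs of $D'''$, $D''$, $D'$ together with the limits at infinity and $D(2)>0$ --- and both are routine {\tt Mathematica}-assisted verifications of the same quantity.
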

\begin{proof}
Combined with the above lemma, we only need to prove that the following inequality holds for $n \ge 2$,
\begin{align*}
D(n) = 2 \log e_n + n(n+1) \log f_{n+1} - n(n+3) \log g_n \ge 0.
\end{align*}
\indent By {\tt Mathematica}, we know that
\[
\lim_{n \to +\infty} D(n) = + \infty, \quad \lim_{n \to +\infty} D'(n) = \lim_{n \to +\infty} D''(n) = 0,
\]
and $D^{'''}(n) > 0$ for $n \ge 2$. Consequently, we deduce that $D^{''}(n) < 0$ and $D^{'}(n) > 0$ for all $n \ge 2$. Since $D(2) > 0$, it follows that that $D(n) > 0$ for all $n \ge 2$. By examining the initial values, we finally conclude that the sequence $\{\sqrt[n]{c(n)}\}_{n \ge 1}$ is log-convex.
\end{proof}
\indent Similar to Lemma \ref{l3.2}, we first establish the criterion for the ratio log-concavity of the root sequence of a sequence. We likewise omit the proof process.
\begin{lemma}\label{l3.4}
Let $\{a_n\}_{n \ge 1}$ be a positive sequence. Suppose we can find a lower bound $h_n$ of $a_n$, a lower bound $f_n$ and an upper bound $g_n$ of $\frac{a_{n+1}}{a_n}$, such that for $n \ge N$,
\[
6\log h_n +(n^2 - n)(2n + 5)\log f_n - (n^2 + n)(n + 2)\log g_{n-1} - (n^3 - n)\log g_{n+1} \ge 0.
\]
Then the root sequence $\{\sqrt[n]{a_n}\}_{n \ge N}$ is ratio log-concave.
\end{lemma}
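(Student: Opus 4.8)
The plan is to follow the strategy behind Lemma~\ref{l3.2}: recast the ratio log-concavity of the root sequence as a third-order difference inequality for $\beta_m:=\tfrac{1}{m}\log a_m$, clear denominators, re-express everything relative to the base point $a_n$ through the increments $R_m:=\log(a_{m+1}/a_m)$, and finally replace each increment by the one-sided bound whose direction matches the sign of its coefficient. Writing $b_n=\sqrt[n]{a_n}$, the sequence $\{b_n\}_{n\ge N}$ is ratio log-concave exactly when $\{b_{n+1}/b_n\}$ is log-concave, so it suffices to establish, for every $n\ge N$,
\[
\Bigl(\frac{b_n}{b_{n-1}}\Bigr)\Bigl(\frac{b_{n+2}}{b_{n+1}}\Bigr)\le\Bigl(\frac{b_{n+1}}{b_n}\Bigr)^{2},
\]
which upon taking logarithms is the third-difference inequality $-\beta_{n-1}+3\beta_n-3\beta_{n+1}+\beta_{n+2}\le 0$.

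First I would multiply this inequality by the positive quantity $(n-1)n(n+1)(n+2)$ and substitute $\beta_m=\tfrac{1}{m}\log a_m$, obtaining a linear combination of $\log a_{n-1},\log a_n,\log a_{n+1},\log a_{n+2}$. Using $\log a_{n-1}=\log a_n-R_{n-1}$, $\log a_{n+1}=\log a_n+R_n$ and $\log a_{n+2}=\log a_n+R_n+R_{n+1}$ and collecting terms --- the coefficient of $\log a_n$ collapses, exactly as the analogous coefficient collapses to $2$ in Lemma~\ref{l3.2} --- the inequality becomes equivalent to
\[
6\log a_n-n(n+1)(n+2)\,R_{n-1}+(n^2-n)(2n+5)\,R_n-(n^3-n)\,R_{n+1}\ge 0 .
\]
Then I would bound each summand from below according to the sign of its coefficient: $a_n\ge h_n>0$ gives $6\log a_n\ge 6\log h_n$; the negative coefficient on $R_{n-1}=\log(a_n/a_{n-1})$ together with $R_{n-1}\le\log g_{n-1}$ gives $-n(n+1)(n+2)R_{n-1}\ge-(n^2+n)(n+2)\log g_{n-1}$; the positive coefficient on $R_n=\log(a_{n+1}/a_n)$ together with $R_n\ge\log f_n$ gives $(n^2-n)(2n+5)R_n\ge(n^2-n)(2n+5)\log f_n$; and the negative coefficient on $R_{n+1}=\log(a_{n+2}/a_{n+1})$ together with $R_{n+1}\le\log g_{n+1}$ gives $-(n^3-n)R_{n+1}\ge-(n^3-n)\log g_{n+1}$. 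Adding the four estimates, the left-hand side is at least the expression in the hypothesis, which is assumed nonnegative for $n\ge N$; hence the third-difference inequality holds for all $n\ge N$, so $\{b_{n+1}/b_n\}$ is log-concave and $\{\sqrt[n]{a_n}\}_{n\ge N}$ is ratio log-concave.

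The only part of this that is not a one-line sign check is the algebraic identity in the middle, namely the claim that after clearing denominators and passing to the base point $a_n$ the four coefficients are precisely $6$, $-n(n+1)(n+2)$, $(n^2-n)(2n+5)$ and $-(n^3-n)$. This is a routine polynomial identity in $n$; the quickest way to confirm it is to evaluate both sides on a constant sequence (which isolates the $\log a_n$ coefficient) and on a geometric sequence $a_m=t^m$ (for which the third difference vanishes, fixing the three increment coefficients). With that identity in hand, the argument runs in exact parallel with the proof of Lemma~\ref{l3.2}.
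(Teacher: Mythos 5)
Your proof is correct and is essentially the argument the paper intends: the paper omits the proof of Lemma \ref{l3.4}, pointing to the same method as Lemma \ref{l3.2} (i.e.\ \cite[Theorem 3.1]{hou2023log}), which is exactly your scheme of writing ratio log-concavity of the root sequence as a third-difference inequality for $\frac{1}{m}\log a_m$, multiplying by $(n-1)n(n+1)(n+2)>0$, re-expressing via $\log a_n$ and the increments $R_{n-1},R_n,R_{n+1}$, and bounding each term according to the sign of its coefficient. The coefficient identity you invoke ($6$, $-(n^2+n)(n+2)$, $(n^2-n)(2n+5)$, $-(n^3-n)$) checks out by direct expansion, so there is no gap.
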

Subsequently, we use Lemma \ref{l3.4}, lower and upper bounds (\ref{2.1})--(\ref{2.3}) to prove the ratio log-concavity of the root sequence $\{\sqrt[n]{c(n)}\}_{n \ge 1}$.
\begin{theorem}\label{t3.5}
The sequence $\{\sqrt[n]{c(n)}\}_{n\ge 1}$ is ratio log-concave.
\end{theorem}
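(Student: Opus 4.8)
The plan is to invoke Lemma~\ref{l3.4} with the lower bound $h_n = e_n$ from (\ref{2.1}), the lower bound $f_n$ from (\ref{2.2}) and the upper bound $g_n$ from (\ref{2.3}) of $r(n) = c(n+1)/c(n)$, exactly as in the proof of Theorem~\ref{t3.3}. Set
\begin{align*}
E(n) = {}& 6\log e_n + (n^2-n)(2n+5)\log f_n\\
& - (n^2+n)(n+2)\log g_{n-1} - (n^3-n)\log g_{n+1}.
\end{align*}
By Lemma~\ref{l3.4} it suffices to produce an explicit (and, one expects, small) $N$ with $E(n)\ge 0$ for all $n\ge N$, and then to check the finitely many remaining instances of the ratio log-concavity of $\{\sqrt[n]{c(n)}\}$ for $1\le n<N$ directly; each such instance involves only $c(n),c(n+1),c(n+2),c(n+3)$, whose exact values are known.

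For the main estimate, a short preliminary computation shows that the cubic parts of the three polynomial weights cancel: writing each $\log g_{n\pm1}$ as $\log g_n$ plus a Taylor correction, the coefficient multiplying the logarithm ``evaluated at $n$'' collapses to $-6n$. Hence $E(n)$ behaves like $24\pi\sqrt n$, coming from $6\log e_n = 24\pi\sqrt n - 3\log 2 - 6\log n$, minus an $O(\sqrt n)$ contribution produced by the combination of the $f$- and $g$-logarithms; using {\tt Mathematica} one verifies that the resulting leading coefficient of $\sqrt n$ is positive, so $\lim_{n\to\infty}E(n)=+\infty$. Since $E(n)$ is asymptotically $c\sqrt n + O(\log n)$, every derivative $E^{(i)}(n)$ tends to $0$ for $i\ge 1$. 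I would then compute $\lim_{n\to\infty}E^{(i)}(n)=0$ for the first few $i$, and locate the smallest odd order $j$ (plausibly $j=3$ or $j=5$) for which $E^{(j)}(n)$ is of one sign on $[N,\infty)$. A descending cascade finishes the job: $E^{(j)}>0$ together with $\lim E^{(j-1)}=0$ forces $E^{(j-1)}<0$, hence $E^{(j-2)}>0$, and so on, alternately, down to $E'(n)>0$ on $[N,\infty)$; combined with $E(N)>0$ this yields $E(n)>0$ for all $n\ge N$, and a check of the initial values of $\sqrt[n]{c(n)}$ completes the proof.

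The hard part will be the precise bookkeeping of the $O(\sqrt n)$ cancellations. Because $f_n$ and $g_n$ are genuinely distinct functions --- their gap encodes both the width of the band $a(n)\le c(n)\le b(n)$ and the subleading terms of the asymptotics of $c(n)$ --- the cancellation that removes the $n^{5/2}$ and $n^{3/2}$ terms in $(n^2-n)(2n+5)\log f_n - (n^2+n)(n+2)\log g_{n-1} - (n^3-n)\log g_{n+1}$ is only approximate, and one must confirm that the surviving $\sqrt n$-term has strictly positive coefficient while the error remains small enough to keep the chosen derivative $E^{(j)}$ one-signed all the way down to a small threshold $N$. This is precisely where the extra precision of the bounds (\ref{2.2})--(\ref{2.3}) is needed, and where {\tt Mathematica} carries out the heavy symbolic and numerical work; the remaining tasks --- verifying the base inequality $E(N)>0$ and the handful of small-$n$ cases --- are routine.
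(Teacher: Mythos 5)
Your proposal follows essentially the same route as the paper: it invokes Lemma~\ref{l3.4} with the same bounds $e_n$, $f_n$, $g_n$ from (\ref{2.1})--(\ref{2.3}), forms the same quantity $E(n)$, and establishes $E(n)>0$ by a {\tt Mathematica}-assisted derivative cascade (the paper shows the fourth derivative is negative for $n\ge 2$, i.e.\ your anticipated case $j=3$, then descends to $E'>0$ and uses $E(2)>0$) followed by a check of initial values. This matches the paper's proof in both strategy and execution.
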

\begin{proof}
According to Lemma \ref{l3.4}, it suffices to prove that when $n \ge 2$, the following inequality holds,
\begin{align*}
& F(n)\\
= {} & 6\log e_n +(n^2 - n)(2n + 5)\log f_n - (n^2 + n)(n + 2)\log g_{n-1} - (n^3 - n)\log g_{n+1}\\
\ge {} & 0.
\end{align*}
\indent By calculation, we can derive that
\[
\lim_{n \to +\infty} D(n) = + \infty, \quad \lim_{n \to +\infty} D^{'}(n) = \lim_{n \to +\infty} D^{''}(n) = \lim_{n \to +\infty} D^{'''}(n) = 0,
\]
and $D^{(4)}(n) < 0$ for $n \ge 2$. By derivative analysis, we deduce that $D^{'''}(n) > 0$, $D^{''}(n) < 0$ and $D^{'}(n) > 0$ hold for $n \ge 2$. Given that $D(2) > 0 $, we obtain that $D(n) > 0$ for all $n \ge 2$. By checking the initial value, we finally conclude that the sequence $\{\sqrt[n]{c(n)}\}_{n \ge 1}$ is ratio log-concave.
\end{proof}

\section{Higher Tur{\' a}n inequality and Laguerre inequality of order 2}\label{s4}
In this section, we first give a sufficient condition for the higher order Tur{\' a}n inequality, which was proposed by Hou and Li \cite{hou2021log}.
\begin{lemma}\label{l4.1}
Let
\[t(x,y) = 4(1-x)(1-y) - (1-xy)^{2} \ \ and \ \ u_n = \frac{a_{n-1}a_{n+1}}{a_{n}^{2}}.\]
If there exist an integer $N$, a lower bound $f_n$ and an upper bound $g_n$ of $u_n$ such that for all $n \ge N$,
\[
f_{n} < u_{n} < g_{n}, 
\]
and\\
\[ t(f_{n}, f_{n+1}) > 0, \quad t(f_{n}, g_{n+1}) > 0, \quad t(g_{n}, f_{n+1}) > 0, \quad t(g_{n}, g_{n+1}) > 0.\]
Then $\{a_n\}_{n \ge N}$ satisfies the higher order Tur{\' a}n inequality.
\end{lemma}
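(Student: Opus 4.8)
The plan is to convert the higher order Tur\'an inequality (\ref{1.2}) into a single two-variable inequality $t(u_n,u_{n+1})\ge 0$, and then to argue that positivity of $t$ at the four corners of the rectangle $[f_n,g_n]\times[f_{n+1},g_{n+1}]$ propagates to positivity at the interior point $(u_n,u_{n+1})$.

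First I would perform the reduction. Dividing the left-hand side of (\ref{1.2}) by $a_n^2a_{n+1}^2$ --- permissible since $\{a_n\}$ is a positive sequence --- and recalling $u_n=a_{n-1}a_{n+1}/a_n^2$, $u_{n+1}=a_na_{n+2}/a_{n+1}^2$, together with the identity $a_{n-1}a_{n+2}/(a_na_{n+1})=u_nu_{n+1}$, one finds
\[
\frac{4\bigl(a_n^2-a_{n-1}a_{n+1}\bigr)\bigl(a_{n+1}^2-a_na_{n+2}\bigr)-\bigl(a_na_{n+1}-a_{n-1}a_{n+2}\bigr)^2}{a_n^2a_{n+1}^2}
=4(1-u_n)(1-u_{n+1})-(1-u_nu_{n+1})^2=t(u_n,u_{n+1}).
\]
So it suffices to prove $t(u_n,u_{n+1})>0$ for all $n\ge N$.

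Next I would use the explicit shape $t(x,y)=3-4x-4y+6xy-x^2y^2$. For each fixed $y$ this is a quadratic in $x$ whose leading coefficient $-y^2$ is $\le0$, hence $x\mapsto t(x,y)$ is concave; symmetrically $y\mapsto t(x,y)$ is concave for each fixed $x$. Since a concave function on a closed interval takes its minimum at an endpoint, and since $f_n<u_n<g_n$ and $f_{n+1}<u_{n+1}<g_{n+1}$, applying this first in the $x$-variable and then in the $y$-variable gives
\[
t(u_n,u_{n+1})\;\ge\;\min\bigl\{t(f_n,u_{n+1}),\,t(g_n,u_{n+1})\bigr\}\;\ge\;\min\bigl\{t(f_n,f_{n+1}),\,t(f_n,g_{n+1}),\,t(g_n,f_{n+1}),\,t(g_n,g_{n+1})\bigr\}>0 ,
\]
the final strict inequality being exactly the hypothesis. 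Hence $\{a_n\}_{n\ge N}$ satisfies (\ref{1.2}).

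I do not expect a genuine obstacle here. The two points to get right are the bookkeeping in the reduction (correctly matching the ratios built from $a_{n-1},a_n,a_{n+1},a_{n+2}$ with $u_n$, $u_{n+1}$ and $u_nu_{n+1}$) and the observation that $t$ is \emph{not} jointly concave, so the descent to the corners must be carried out one variable at a time --- which is precisely why all four corner conditions, rather than fewer, appear in the statement. It is worth recording that positivity of $\{a_n\}$ enters only through the division by $a_n^2a_{n+1}^2$; after the reduction the argument uses nothing beyond $(u_n,u_{n+1})$ lying in the prescribed rectangle.
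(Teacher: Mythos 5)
Your proof is correct: the reduction of (\ref{1.2}) to $t(u_n,u_{n+1})>0$ after dividing by $a_n^2a_{n+1}^2$, and the descent to the four corners via concavity of $t$ in each variable separately, are both sound. Note that the paper itself gives no proof of this lemma --- it is quoted from Hou and Li \cite{hou2021log} --- and your argument is essentially the standard one used there, so there is nothing further to reconcile.
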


\indent Next, we use the above lemma and bounds (\ref{2.4}) and (\ref{2.5}) to give a proof that the sequence $\{c(n)\}_{n\ge 1}$ satisfies the higher order Tur{\'a}n inequality.
\begin{theorem}\label{t4.2}
The sequence $\{c(n)\}_{n \ge 1}$ satisfies the higher order Tur{\'a}n inequality.
\end{theorem}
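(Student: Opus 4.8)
The plan is to apply Lemma~\ref{l4.1} with $a_n=c(n)$, taking as lower and upper bounds for $u_n=u(n)=\frac{c(n-1)c(n+1)}{c(n)^2}$ the functions $p(n)$ and $q(n)$ of Theorem~\ref{t2.6}. Since Theorem~\ref{t2.6} already supplies $p(n)<u(n)<q(n)$ for all $n\ge 1$, what remains is to exhibit an integer $N$ for which the four ``corner'' inequalities
\[
t(p_n,p_{n+1})>0,\quad t(p_n,q_{n+1})>0,\quad t(q_n,p_{n+1})>0,\quad t(q_n,q_{n+1})>0
\]
hold for all $n\ge N$, where $t(x,y)=4(1-x)(1-y)-(1-xy)^2$; the higher order Tur\'an inequality \eqref{1.2} for the finitely many remaining indices $2\le n<N$ is then verified directly from the known integer values $c(1)=196884$, $c(2)=21493760$, $c(3)=864299970,\ldots$, since \eqref{1.2} at index $n$ is equivalent to $t(u(n),u(n+1))\ge 0$.

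For the heuristic behind the corner inequalities, write $x=1-a$, $y=1-b$; a one-line computation gives
\[
t(1-a,1-b)=-(a-b)^2+2ab(a+b)-a^2b^2 .
\]
For each corner, $a\in\{1-p_n,1-q_n\}$ and $b\in\{1-p_{n+1},1-q_{n+1}\}$, and by Theorem~\ref{t2.6} both are $\sim\pi n^{-3/2}$. Hence $(a-b)^2\sim\frac{9\pi^2}{4}n^{-5}$ and $a^2b^2\sim\pi^4 n^{-6}$, whereas $2ab(a+b)\sim 4\pi^3 n^{-9/2}$; since $n^{-9/2}$ decays strictly more slowly than $n^{-5}$ and $n^{-6}$, every corner value is asymptotic to $4\pi^3 n^{-9/2}>0$. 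Crucially, this positive leading term is determined only by the leading coefficient $-\pi n^{-3/2}$ of $1-u(n)$, which $p$ and $q$ share, and is insensitive to their finer structure.

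To turn this into a proof with an explicit $N$, I would proceed exactly as in the proofs of Theorems~\ref{t3.3} and~\ref{t3.5}: substituting the explicit rational-in-$\sqrt n$ expressions for $p$ and $q$ into $t$ produces four functions $\Phi(n)$, each with $\lim_{n\to\infty}\Phi(n)=0$ and with $\lim_{n\to\infty}\Phi^{(k)}(n)=0$ for the first several $k$; one then shows via {\tt Mathematica} that some higher derivative $\Phi^{(m)}(n)$ has constant sign for $n\ge N$ and descends through $\Phi^{(m-1)},\ldots,\Phi',\Phi$ to conclude $\Phi(n)>0$ for $n\ge N$. The main obstacle is the almost total cancellation in $t$ at the leading orders: the genuinely positive contribution has size only $n^{-9/2}$, while the individual pieces $(1-p_n)(1-p_{n+1})$ and $(1-p_np_{n+1})^2$ have size $n^{-3}$, so one must carry the expansions of $p_n$, $q_n$ — and of differences such as $(1-p_n)-(1-p_{n+1})$, which mix the incommensurate radicals $\sqrt n$ and $\sqrt{n+1}$ — to high enough order that the discrepancy between $p$ and $q$ cannot swallow the $4\pi^3 n^{-9/2}$ main term; the subsequent monotonicity bookkeeping, while in principle mechanical, is where the delicacy lies.
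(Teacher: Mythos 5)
Your proposal follows essentially the same route as the paper: apply Lemma~\ref{l4.1} with the bounds $p(n)$ and $q(n)$ of Theorem~\ref{t2.6}, establish the four corner inequalities $t(\cdot,\cdot)>0$ for all $n\ge N$, and dispose of the finitely many remaining indices by direct computation with the values of $c(n)$. The only difference is in the final verification: where you sketch a derivative-descent argument in the style of Theorems~\ref{t3.3} and~\ref{t3.5}, the paper simply observes (via {\tt Mathematica}) that each of the four expressions $t(p(n),p(n+1))$, $t(p(n),q(n+1))$, $t(q(n),p(n+1))$, $t(q(n),q(n+1))$ has a unique real root lying below $3$ and is positive at one larger value, so $N=3$ works; your leading-term analysis $t\sim 4\pi^{3}n^{-9/2}>0$ correctly explains why such a check must succeed.
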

\begin{proof}
According to Theorem \ref{t2.6}, $u(n)$ has a lower bound $p(n)$ and an upper bound $q(n)$. Then by Lemma \ref{l4.1}, we only need to prove that the following four inequalities holds for all $n \ge 3$,
\begin{multline*}
t(p({n}), p({n+1})) > 0, \ t(p({n}), q({n+1})) > 0, \  t(q({n}), p({n+1})) > 0, \\  t(q({n}), q({n+1})) > 0.
\end{multline*}
\indent By {\tt Mathematica}, we analyze $t(p(n),\ p(n+1)) = 0$, and find a unique real root $c_0\in (0,1)$. Since $t(p(1),p(2)) > 0$, we deduce that $t(p(n),\ p(n+1)) > 0$ for $n \ge 1$. \\
\indent For $t(p(n),\ q(n+1)) = 0$, there exists a unique real root $d_0\in (1,2)$. Given that $t(p(2),q(3)) > 0$, we can obtain that $t(p(n),\ q(n+1)) > 0$ for $n \ge 2$.\\
\indent Analyzing $t(q(n),\ p(n+1)) = 0$, we derive that there is a unique real root $e_0\in (2,3)$. Since $t(q(3),p(4)) > 0$, it follows that $t(q(n),\ p(n+1)) > 0$ for $n \ge 3$.\\\indent Finally, for $t(q(n),\ q(n+1)) = 0$, there exists a unique real root $f_0\in (1,2)$. Given that $t(q(2),q(3)) > 0$, we conclude that $t(q(n),\ q(n+1)) > 0$ for $n \ge 2$.\\
\indent Thus we can derive that when $n\ge 3$, all four of the above inequalities hold. By examining the initial values, we confirm that the sequence $\{c(n)\}_{n \ge 1}$ satisfies the higher order Tur{\'a}n inequality.

\end{proof}
\indent Next, we present a criterion for determining whether a sequence satisfies Laguerre inequality of order two, which was proposed by Li in \cite{li2022log}.
\begin{lemma}\label{l4.3}
Let
\[u_n = \frac{a_{n-1}a_{n+1}}{a_{n}^{2}}.\]
If there exist an integer $N_{1}$, a lower bound $f_n$ and an upper bound $g_{n}$ of $u_{n}$ such that for all $n \ge N_{1}$,
\[0 < f_n < u_n < g_n,\]
and
\[f_{n-1}f_{n}^{2}f_{n+1} - 4g_{n} + 3 > 0,\quad n\ge N_{2}.\]
Let $N= \max\{N_{1}, N_{2}\}$, then the sequence $\{a_n\}_{n \ge N}$ satisfies Laguerre inequality of order two.
\end{lemma}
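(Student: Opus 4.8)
The plan is to collapse the order-two Laguerre inequality for $\{a_n\}$ to the single polynomial inequality in the $u_n$'s that already appears in the hypothesis. First I would unwind the definition (\ref{1.4}) with $m=2$: since $\binom{4}{0}=\binom{4}{4}=1$, $\binom{4}{1}=\binom{4}{3}=4$, $\binom{4}{2}=6$ and the signs are $(-1)^{k+2}$, the sum telescopes to
\begin{align*}
L_2(a_n) = a_n a_{n+4} - 4 a_{n+1} a_{n+3} + 3 a_{n+2}^2,
\end{align*}
so it suffices to prove this quantity is nonnegative for $n\ge N$. Working with the (implicitly) positive sequence $\{a_n\}$ — positivity is forced, at least term by term, by $0<f_n<u_n$ — I would divide through by $a_{n+2}^2>0$. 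The only real content is then the elementary identity, obtained by a direct cancellation from $u_k = a_{k-1}a_{k+1}/a_k^2$,
\begin{align*}
\frac{a_n a_{n+4}}{a_{n+2}^2} = u_{n+1}\, u_{n+2}^2\, u_{n+3}, \qquad \frac{a_{n+1} a_{n+3}}{a_{n+2}^2} = u_{n+2},
\end{align*}
which gives $L_2(a_n) = a_{n+2}^2\bigl(u_{n+1} u_{n+2}^2 u_{n+3} - 4 u_{n+2} + 3\bigr)$, so everything reduces to showing the bracket is positive.

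Next I would insert the bounds. Since $f_k>0$ for the indices involved, monotonicity of a product of positives gives $u_{n+1} u_{n+2}^2 u_{n+3} > f_{n+1} f_{n+2}^2 f_{n+3}$, while $u_{n+2} < g_{n+2}$ gives $-4u_{n+2} > -4g_{n+2}$. Adding, the bracket is bounded below by $f_{n+1} f_{n+2}^2 f_{n+3} - 4 g_{n+2} + 3$, which is exactly the expression $f_{m-1} f_m^2 f_{m+1} - 4 g_m + 3$ of the hypothesis evaluated at $m = n+2$; it is positive whenever $n+2 \ge N_2$. Hence $L_2(a_n)>0$ in that range.

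Finally I would check the ranges of validity. The chain above uses $0<f_k<u_k<g_k$ at $k=n+1,n+2,n+3$ (hence needs $n+1\ge N_1$) and uses the cubic-type hypothesis at $m=n+2$ (hence needs $n+2\ge N_2$); both hold as soon as $n\ge \max\{N_1,N_2\}=N$, so $L_2(a_n)\ge 0$ for all $n\ge N$, as claimed. I do not expect a genuine obstacle: this is precisely the order-two Laguerre analogue of Lemma \ref{l4.1}, following the template of \cite{li2022log}. The only points demanding a little care are the bookkeeping of the index shifts — verifying that the single constant $N=\max\{N_1,N_2\}$ simultaneously covers $n+1,n+2,n+3$ and the shifted cubic inequality — and, if one wants to be scrupulous, noting at the outset that positivity of $\{a_n\}$ (needed to divide by $a_{n+2}^2$ and to use monotonicity of products) is part of the standing hypotheses.
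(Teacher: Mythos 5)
Your proof is correct, and it is worth noting that the paper itself gives no argument for this lemma at all: it is simply quoted as a criterion from \cite{li2022log}, so your write-up supplies the standard proof that the paper (and presumably Li's preprint) has in mind — expand $L_2(a_n)=a_na_{n+4}-4a_{n+1}a_{n+3}+3a_{n+2}^2$, factor out $a_{n+2}^2$ via $a_na_{n+4}/a_{n+2}^2=u_{n+1}u_{n+2}^2u_{n+3}$ and $a_{n+1}a_{n+3}/a_{n+2}^2=u_{n+2}$, and bound below by $f_{n+1}f_{n+2}^2f_{n+3}-4g_{n+2}+3>0$; your index bookkeeping ($n+1\ge N_1$, $n+2\ge N_2$, both implied by $n\ge\max\{N_1,N_2\}$) is fine. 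One small inaccuracy: $0<f_k<u_k$ does \emph{not} force term-by-term positivity of $a_n$ (e.g.\ $a_n=(-1)^nb_n$ with $b_n>0$ gives the same $u_n$); but this does not matter, since your argument only uses $a_{n+2}^2>0$ (guaranteed because $u_{n+2}$ is defined and positive, so $a_{n+2}\neq0$) and positivity of the $u_k$'s themselves, so the conclusion $L_2(a_n)>0$ stands without any positivity assumption on $\{a_n\}$. You might simply delete the parenthetical claim that positivity is forced and observe instead that it is not needed.
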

\indent Then, we use Lemma \ref{l4.3}, the lower bound $p(n)$, and the upper bound $q(n)$ of $u(n)$ to prove that sequence $\{c(n)\}_{n \ge 1}$ satisfies the Laguerre inequality of order two.
\begin{theorem}\label{t4.4}
The sequence $\{c(n)\}_{n \ge 1}$ satisfies the Laguerre inequality of order two.
\end{theorem}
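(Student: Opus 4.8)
The plan is to apply Lemma~\ref{l4.3} with the bounds from Theorem~\ref{t2.6}, taking $f_n = p(n)$ and $g_n = q(n)$. Theorem~\ref{t2.6} already supplies $p(n) < u(n) < q(n)$, and since $c(n) > 0$ forces $u(n) > 0$, together with a direct check that $p(n) > 0$ once $n$ is past a small threshold, the hypothesis $0 < f_n < u_n < g_n$ of Lemma~\ref{l4.3} holds for all $n \ge N_1$ with an explicit small $N_1$. It then remains to produce an $N_2$ such that
\[
G(n) := p(n-1)\,p(n)^2\,p(n+1) - 4\,q(n) + 3 > 0, \qquad n \ge N_2 .
\]
Once this is established, Lemma~\ref{l4.3} yields the Laguerre inequality of order two for $\{c(n)\}_{n \ge N}$ with $N = \max\{N_1, N_2\}$, and a finite numerical check of the Fourier coefficients $c(0), c(1), \dots, c(N)$ extends the conclusion to the claimed range $n \ge 1$.

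The heart of the matter is thus the scalar inequality $G(n) > 0$. Because $p(n) \to 1$ and $q(n) \to 1$ as $n \to \infty$, we have $G(n) \to 1\cdot 1 - 4 + 3 = 0$, so no crude lower bound suffices; instead I would imitate the derivative-chasing argument used in the proofs of Theorems~\ref{t3.3}, \ref{t3.5}, and \ref{t4.2}. Concretely, using {\tt Mathematica} one expands $G(n)$ as a finite sum of powers $n^{-j/2}$ and shifted powers $(n \pm 1)^{-j/2}$, verifies $\lim_{n \to \infty} G(n) = 0$ together with $\lim_{n \to \infty} G^{(i)}(n) = 0$ for the first several derivatives, and then establishes that some derivative $G^{(m)}(n)$ has constant sign for $n \ge N_2$. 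From the sign of $G^{(m)}$ one reads off, in turn, the signs of $G^{(m-1)}, G^{(m-2)}, \dots, G'$ --- each being monotone on $[N_2,\infty)$ with limit $0$ --- and concludes $G'(n) < 0$, hence $G(n) > 0$, for $n \ge N_2$, with $N_2$ coming out as a small explicit integer; the evaluation $G(N_2) > 0$ anchors the conclusion.

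The step I expect to be the main obstacle is exactly this derivative analysis. The $n^{-3/2}$ and $n^{-2}$ corrections in $p$ and $q$ very nearly cancel in the combination $G$, so $G$ descends to its limit $0$ unusually fast, and the sign of $G$ on the tail is dictated by small lower-order terms; one must therefore differentiate several times before reaching a derivative whose sign is manifest, all the while carrying the half-integer powers $(n \pm 1)^{-j/2}$, and one must confirm that the claimed monotonicities hold on the entire tail $n \ge N_2$ rather than only asymptotically. This is what makes the computer algebra computation essentially unavoidable and constitutes the delicate part of the proof; the reduction to $G(n) > 0$ via Lemma~\ref{l4.3} and the finite initial-value verification are routine.
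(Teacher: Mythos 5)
Your proposal is correct and follows essentially the same route as the paper: apply Lemma~\ref{l4.3} with the bounds $p(n)$ and $q(n)$ from Theorem~\ref{t2.6}, reduce to the scalar inequality $p(n-1)p(n)^2p(n+1)-4q(n)+3>0$ on a tail, and finish by checking the initial coefficients. The only (inessential) difference is in how that tail inequality is certified: you propose the derivative-chasing argument of Theorems~\ref{t3.3} and \ref{t3.5}, while the paper simply isolates the unique real root of $F(n)=0$ in $(2,3)$ via {\tt Mathematica} and checks $F(3)>0$; both are sound, and your observation that the expression tends to $0$ (in fact like $4n^{-5/2}$) correctly identifies why some such care is needed.
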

\begin{proof}
From the Lemma \ref{l4.3}, it suffices to prove that the following inequalities hold for all $n \ge 3$,
\[F(n) = p(n-1)p(n)^{2}p(n+1) - 4q(n) +3 > 0.\]
\indent Using {\tt Mathematica}, we find that $F(n)=0$ has a unique root $f_{0}\in (2,3)$. Since $F(3) > 0$, we deduce that $F(n) > 0$ for $n \ge 3$. By checking the initial values, we finally conclude that the sequence $\{c(n)\}_{n \ge 1}$ satisfies  Laguerre inequality of order 2.
\end{proof}

\section*{Acknowledgement}
This work was supported by the by National Key Research and Development Program of China (foundation number 2023YFA1009401).

\end{document}